\title{Characterising quasi-isometries of the free group}
\newtheorem{theorem}{Theorem}[section]
\newtheorem{thm}{Theorem}[section]
\newtheorem{lemma}[theorem]{Lemma}
\newtheorem{prop-defn}[theorem]{Proposition-Definition}
\newtheorem{claim}{Claim}
\newtheorem*{theorem:main}{Main Theorem} 
\theoremstyle{definition} 
\newtheorem{definition}[theorem]{Definition}
\newtheorem{remark}[theorem]{Remark}
\newtheorem*{remarks*}{Remarks}
\DeclarePairedDelimiter\floor{\lfloor}{\rfloor}
\DeclarePairedDelimiter\ceil{\lceil}{\rceil}
\author[Antoine Goldsborough]{Antoine Goldsborough}
	\address{Maxwell Institute and Department of Mathematics, Heriot-Watt University, Edinburgh, UK}
	\email{ag2017@hw.ac.uk}
\author{Stefanie Zbinden}
	\address{Maxwell Institute and Department of Mathematics, Heriot-Watt University, Edinburgh, UK}
	\email{sz2020@hw.ac.uk}
\begin{document}

\maketitle

\begin{abstract}
We introduce the notion of mixed subtree quasi-isometries, which are self quasi-isometries of regular trees built in a specific inductive way. We then show that any self quasi-isometry of a regular tree is at bounded distance from a mixed-subtree quasi-isometry. Since the free group is quasi-isometric to a regular tree, this provides a way to describe all self quasi-isometries of the free group. In doing this, we also give a way of constructing quasi-isometries of the free group.  
\end{abstract}

\section{Introduction}

Quasi-isometries are the most fundamental maps in geometric group theory. However, for most metric spaces, very little is known about their quasi-isometry group and there are no known tangible ways to describe all quasi-isometries, except in some cases where quasi-isometric rigidity is known. Notable exceptions to this are Baumslag-Solitar groups which are described in \cite{WhyteBaumslag} and 3-dimensional solvable Lie groups which have been studied by Eskin, Fisher and Whyte in \cite{EskinFisherWhyte07,EskinFisherWhyte, EskinFisherWhyte2}. 

With this paper, we add the free group $\mathbb{F}_2$, or more generally regular trees, to the list of spaces where all quasi-isometries up to bounded distance can be described. 
In particular, we introduce the notion of a $D$-mixed subtree quasi-isometry which is a type of quasi-isometry from regular trees to themselves. While a precise definition can be found in Section \ref{sec:qi_of_reg_trees}, the main idea behind them is the following; having defined the quasi-isometry for vertices $v$ at distance $nD$ from the root, one next defines what the quasi-isometry does on the next level, that is, vertices at distance $(n+1)D$ from the root. Moreover, the valid choices of extending the map to the vertices at distance $(n+1)D$ only depend on which of the vertices of distance $nD$ are mapped to the same vertex, but is otherwise independent of the choices made previously. 

Our main theorem below states that a map from a regular tree to itself is a quasi-isometry if and only if it is at bounded distance from a mixed-subtree quasi-isometry.

\begin{thm}\label{thm:main}
     Let $T$ be a regular tree of degree at least 3, rooted at $v_0$. Let $f: T \to T$ be a $C$-quasi-isometry such that $f(v_0)=v_0$. Then there is a constant $D$ only depending on $C$ and a $D$-deep mixed subtree quasi-isometry $g: T\to T$ such that $f$ and $g$ are at bounded distance from each other. 
\end{thm}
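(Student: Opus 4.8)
The plan is to build $g$ one coarse level at a time, snapping $f$ to a combinatorial grid of resolution $D$, and to read off the defining locality constraint of a mixed subtree quasi-isometry from the way $f$ already distributes descendants inside $T$. Throughout, write $|v| = d(v_0,v)$ and, for $u,v$, let $(u\mid v)$ be the level of their last common ancestor, so $d(u,v) = |u| + |v| - 2(u\mid v)$. Since $T$ is a tree, hence $0$-hyperbolic, and $f$ is a $C$-quasi-isometry fixing $v_0$, standard hyperbolic geometry furnishes a constant $\delta = \delta(C)$ for which: (i) $f$ sends every geodesic to a $\delta$-fellow-traveler of a geodesic; (ii) $f$ coarsely preserves branching, i.e. the branch point of $f(u),f(v)$ lies within $\delta$ of the $f$-image of the branch point of $u,v$; and (iii) $f$ is $\delta$-coarsely surjective. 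I would fix $D = D(C)$ to be a large multiple of $\delta$, the precise size being dictated by the rigidity and congestion requirements below.

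Let $T^{(D)}$ be the coarse tree whose vertices are $\bigcup_n S_{nD}$, with $v' \in S_{(n+1)D}$ a child of its unique ancestor $v \in S_{nD}$; each vertex away from the root has $(d-1)^D$ such children. I would define $g$ recursively: set $g(v_0) = v_0$, and, given $g(v) = w$ for $v \in S_{nD}$, place each child $v'$ of $v$ at the nearest grid vertex below $w$ of the allowed shape, namely the vertex sitting at a depth a multiple of $D$ below $w$ on the geodesic from $v_0$ toward a point $\delta$-close to $f(v')$. This keeps the image configuration self-similar (images of one coarse level again sit at multiple-of-$D$ depths below their parents' images, at varying absolute radii because of the local multiplicative distortion of $f$), and it is exactly this rounding that produces collisions when $f$ compresses. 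By (ii), the snapped images of the $(d-1)^D$ children of $v$ branch apart below $w$ in a pattern agreeing, up to $\delta$, with the branching of the $f(v')$; choosing $D \gg \delta$ guarantees no child lands at an ambiguous branch level, so the pattern is well defined. I would then extend $g$ geodesically along each length-$D$ segment.

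The main content is the locality constraint: the admissible placements of the children of $v$ must depend only on the collision pattern at level $nD$, that is, on which level-$nD$ vertices share an image under $g$, and not on the earlier history. The mechanism is congestion. If $k$ vertices of $S_{nD}$ snap to a common image $w$, their $k(d-1)^D$ children must be placed into pairwise-distinct sub-cones below $w$ for $g$ to remain coarsely injective; since the number of slots at depth $D'$ below $w$ is $(d-1)^{D'}$, the children must descend to depth $D' \approx D + \log_{d-1} k$, and the set of valid distributions is therefore a function of $k$ alone. I would make this precise by showing that, after recentering at $w$, the restriction of $f$ to the depth-$D$ cone below each colliding $v$ realizes one of \emph{finitely many} quasi-isometric-embedding types of a cone, the finiteness coming from $D = D(C)$ being fixed and the local combinatorics bounded, and that each type corresponds to an admissible extension in the sense of the mixed subtree definition. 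The crux is this decoupling: proving that the local picture below $w$ is determined, up to bounded error and finitely many types, purely by $k$, so that the additive discrepancy absorbed at each step is independent of $n$ and cannot accumulate.

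It then remains to verify that $g$ is a quasi-isometry at bounded distance from $f$. The per-step upper and lower bounds telescope along geodesics to $v_0$, each length-$D$ segment contributing distortion in a fixed range depending on $C$, and coarse surjectivity of $g$ follows from (iii) together with the fact that the snapping reaches every grid vertex near the image of $f$. For the bounded distance, I would argue by convergence to a common boundary point: given $v$, follow the geodesic ray from $v_0$ through $v$ to $\xi \in \partial T$; by construction $f$ and $g$ send this ray to quasi-geodesics converging to the same point $\partial f(\xi)$, with $|g(v)| = |f(v)| \pm O(D)$, so $\delta$-fellow-traveling forces $d(f(v),g(v)) = O(D)$ uniformly. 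I expect the decisive difficulty to be the third paragraph — calibrating $D$ so that depth-$D$ cones are simultaneously rigid enough to fall into finitely many types and spacious enough (enough slots) to resolve every collision, all while the accumulated additive error stays independent of the level.
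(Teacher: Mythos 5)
There is a genuine gap, and it sits exactly where the paper has to work hardest. Your construction never verifies condition \eqref{item:image_is_boundary_subtree} of the definition: the image of each local map $f_v'$ must \emph{equal} $\partial S_v$ for a finite subtree $S_v$ of $T_v$ containing $v$. This is an exact combinatorial condition: the image must be an antichain in $T_v$ that meets every ray emanating from $v$. Snapping each child to ``the nearest grid vertex at depth a multiple of $D$ below $w$'' gives neither property --- nothing forces the snapped images to be pairwise incomparable, and nothing forces them to separate $v$ from infinity in every direction (a whole subcone below $w$ could be missed at the grid depths you allow, even though $f$ is coarsely surjective). The paper's proof of Lemma \ref{lemma:every_qi_is_mixed} gets both properties by a different device: it first replaces $f$ by the order-preserving map $v\mapsto LCA(f(T_v))$ (Lemma \ref{lemma:close_to_growingQI}), then defines $f_v'(b)$ as the \emph{highest} element of $g(B_v)$ lying above $g(b)$, and proves $\mathrm{Im}(f_v')=\partial S_v$ with $S_v=\{y\in T_v:\ y\notin T_a\ \forall a\in g(B_v)\}$, finiteness of $S_v$ coming from coarse surjectivity. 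Your ``finitely many embedding types'' and congestion count address the \emph{existence} of some valid extension (the paper's counting remark via Remark \ref{rmk:isoperimetry_tree}), not the question actually at stake, which is whether a valid extension can be chosen uniformly close to $f$.

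A second, related problem: your recursive step presumes $f(v')$ sits below $w=g(v)$, so that ``the grid vertex below $w$ toward a point near $f(v')$'' makes sense. For a general $C$-quasi-isometry, $f(v')$ need not be a descendant of anything near $f(v)$; your item (ii) only controls this up to $\delta$, and that error is precisely what could accumulate over levels. The paper eliminates it outright by the order-preserving reduction, after which Lemma \ref{lemma:same_dist_properties} bounds $d(f_v'(b),g(b))$ by a constant $K$ at \emph{every} level with no accumulation, and the injectivity condition \eqref{item:kind_of_injective} follows from the clean estimate $d(b,b')\geq 2D\Rightarrow d(g(b),g(b'))>2K+C$. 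Your closing argument via convergence to a common boundary point assumes the non-accumulation it is meant to establish. To repair the proposal you would need to (a) prove an order-preserving reduction or an equivalent exact cone-containment statement, and (b) replace grid-snapping by a choice of $f_v'$ whose image is provably the full boundary of a finite subtree.
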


Since regular trees of degree at least 3 and non-elementary free groups are quasi-isometric, the theorem above describes quasi-isometries of the free group $\mathbb F_2.$

Thanks to this independence mentioned above, mixed-subtree quasi-isometries are a useful tool to construct quasi-isometries with certain desired properties. For example, this technique was used in \cite{goldsboroughzbinden}, where the authors build a self quasi-isometry of $\mathbb F_2$ with the property that the push-forward of a simple random walk by this quasi-isometry does not have a well-defined drift.

We suspect that there might be other applications of this construction. For instance, one might want to consider `random quasi-isometries' of $\mathbb F_2$ and properties of a `generic' quasi-isometry. Further, this characterisation might allow one to better understand the quasi-isometry group $QI(\mathbb F_2)$.

\smallskip
\textbf{Outline.} In Section \ref{sec:prelim} we introduce the relevant notation and prove some of the technical results about quasi-isometries of trees. In particular, we extend a result of \cite{Nairne} and show that any quasi-isometry is at bounded distance from an order-preserving quasi-isometry. In Section \ref{sec:qi_of_reg_trees} we describe mixed-subtree quasi-isometries and prove Theorem \ref{thm:main}, which states that a map from a rooted tree of degree at least 3 to itself is a quasi-isometry if and only if it is at bounded distance from a mixed-subtree quasi-isometry.\\

\smallskip
\textbf{Acknowledgments.} We would like to thank Oli Jones, Alice Kerr, Patrick Nairne and our supervisor Alessandro Sisto for helpful discussions and feedback. We also thank the anonymous referee for very helpful comments which greatly improved the readability of the paper.

\section{Preliminaries}\label{sec:prelim}

In this section, we introduce the relevant notation and some preliminary lemmas.  Throughout this paper, we will view $\mathbb F_2$ as a rooted tree. Therefore, our results will cover self-quasi-isometries of rooted trees. 

\begin{definition}
Let $(X, d)$ be a metric space, we say that a map $f: X\to X$ is a \emph{$C$-quasi-isometric embedding} for a constant $C\geq 1$ if
\begin{align*}
    \frac{d(x, y)}{C} - C\leq d(f(x), f(y)) \leq C d(x, y) + C.
\end{align*}
for all $x, y\in X$.

Further, we say that a $C$-quasi-isometric embedding $f: X\to X$ is a \emph{$C$-quasi-isometry} if there exists a constant $D$ such that for all $y\in X$ there there exists $x\in X$ such that $d(y, f(x))\leq D$. 
\end{definition}

\begin{definition} Let $(X, d)$ be a metric space. Two maps $f, g : X\to X$ are \emph{$C$-bounded} if $d(f(x), g(x))\leq C$ for all $x\in X$. They are \emph{bounded} if they are $C$-bounded for some constant $C$.
\end{definition}

\subsection{Notation on trees}

Let $T$ be a rooted tree and $w\in T$ a vertex. We assume throughout that trees have edge length exactly $1$. We denote the subtree rooted at $w$ by $T_w$. Further the subtree $T_{w}^k\subset T_w$ is the induced subtree of all vertices $v\in T_w$ with $d(w, v)\leq k$. Vertices $v\in T_w$ are called \emph{descendants} of $w$ and $w$ is called an \emph{ancestor} of $v$. Further, a vertex $v\in T_w$ is a \emph{$D$-child} of $w$ if $d(v, w) = D$ and we say that $w$ is the \emph{$D$-parent} of $v$. We denote the ($1$-)parent of a vertex $v\in T$ by $p(v)$ and say that the parent of the root is itself.

We will view a path between vertices $u$ and $v$ as a sequence of neighbouring vertices $u=u_0,u_1, \ldots, u_n=v$, denoted by $(u_0, \ldots, u_n)$. If a path $(u_0, \ldots, u_n)$ is geodesic (or equivalently non-backtracking) we also denote it by $[u_0, u_n]$.

\begin{definition}
For a subset $U \subseteq T$ of a rooted tree $T$ based at $v_0$, we define the \emph{lowest common ancestor} of $U$ as the (unique) vertex $v\in T$ furthest away from $v_0$ such that every vertex $u\in U$ is a descendant of $v$. We will denote this vertex $v$ as $LCA(U)$.
\end{definition}

Observe that if $v = LCA(U)$, then there exists a pair of vertices $x, y\in U$ such that $v$ lies on $[x, y]$.

\begin{definition}
    Let $S$ be a finite subtree of a rooted tree $T$. We say that the \emph{boundary} of $S$, denoted by $\partial S$, is the set of vertices $v\in T\setminus S$ whose parent $p(v)$ is in $S$.
\end{definition}

\begin{remark}
\label{rmk:isoperimetry_tree}
    If $T$ is a $d$-regular tree rooted at $v_0$, then one can easily show by induction that $\abs{\partial S} = \abs{S}(d-2) +1$ if $v_0\not\in S$ and  $\abs{\partial S} = \abs{S}(d-2) +2$ if $v_0\in S$. 
\end{remark}

\begin{definition}
Let $T$ be a tree rooted at $v_0$. A map $f: T\to T$ is \emph{order-preserving} if for every pair of vertices $u, v\in T$ with $v\in T_u$ we have that $f(v)\in T_{f(u)}$.
\end{definition}

In \cite{Nairne}, Nairne shows that every $(1,C)$-quasi-isometry between spherically homogeneous trees is at bounded distance from an order-preserving quasi-isometry. In Lemma \ref{lemma:close_to_growingQI} we extend this result and show that any $C$-quasi-isometry of a rooted tree to itself is at bounded distance from an order-preserving quasi-isometry. 

\subsection{Properties of quasi-isometries of trees} We state and prove three key technical lemmas about quasi-isometries of trees.

The following lemma states that the image of the geodesic $[u, v]$ under a quasi-isometry $f$ coarsely surjects onto the geodesic $[f(u), f(v)]$.  

\begin{lemma}\label{lemma:close_to_geodesic}
Let $T$ be a tree and let $f: T\to T$ be a $C$-quasi-isometry. For every pair of vertices $u, v\in  T$ and vertex $a\in [f(u), f(v)]$ there exists a vertex $b\in [u, v]$ such that $d(f(b), a)\leq C$.
\end{lemma}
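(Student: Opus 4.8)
The plan is to discretise the domain geodesic and exploit the fact that in a tree every vertex is a separating point. I would write the geodesic $[u,v]$ as a sequence of consecutive vertices $u = b_0, b_1, \dots, b_n = v$ with $d(b_{i-1}, b_i) = 1$, so that consecutive images satisfy $d(f(b_{i-1}), f(b_i)) \le 2C$ by the quasi-isometry inequality. The two boundary cases $a = f(u)$ and $a = f(v)$ are immediate (take $b = u$ or $b = v$), so I would assume that $a$ is an interior vertex of $[f(u), f(v)]$, hence $a \neq f(u)$ and $a \neq f(v)$.

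The key step is to track the moment the image path first crosses $a$. Since $a$ is a vertex of $T$, removing it disconnects $T$; let $A$ be the component of $T \setminus \{a\}$ containing $f(u)$. Because $a$ lies strictly between $f(u)$ and $f(v)$ on their geodesic, $f(v)$ lies outside $A$. Thus $f(b_0) = f(u) \in A$ while $f(b_n) = f(v) \notin A$, so there is a least index $i \ge 1$ with $f(b_i) \notin A$, and for this $i$ we have $f(b_{i-1}) \in A$ and $f(b_i) \notin A$.

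I would then invoke the defining property of $A$: in a tree, to pass from a point of $A$ to a point outside $A$ the connecting geodesic must run through the separating vertex $a$ (the case $f(b_i) = a$ being trivial). Hence $a \in [f(b_{i-1}), f(b_i)]$, which by additivity of distance along geodesics in a tree gives $d(f(b_{i-1}), a) + d(a, f(b_i)) = d(f(b_{i-1}), f(b_i)) \le 2C$. Therefore at least one of the two summands is at most $C$, and taking $b$ to be the corresponding vertex $b_{i-1}$ or $b_i$ (both of which lie on $[u,v]$) finishes the proof.

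I expect the main obstacle to be obtaining the sharp constant $C$ rather than $2C$. A naive intermediate-value argument applied to the function $i \mapsto d(f(b_i), a)$ only controls how fast this quantity changes ($\le 2C$ per step) and gives no reason for it to become small, since both endpoints can be far from $a$; likewise, projecting the images onto $[f(u), f(v)]$ controls the projected points but not the images themselves, which may sit far off the geodesic. The separation-plus-additivity argument is precisely what converts the crude per-step bound of $2C$ into the desired bound of $C$, by splitting the single crossing step across $a$ into two pieces whose sum is at most $2C$.
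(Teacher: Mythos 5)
Your proof is correct and is essentially the paper's argument: both proofs walk along the vertices of $[u,v]$, locate the single step at which the images cross $a$, observe that the geodesic between those two consecutive images must pass through $a$, and then split the bound $d(f(b_{i-1}),f(b_i))\leq 2C$ additively at $a$ to get $C$ on one side. The only (cosmetic) difference is how the crossing step is located --- you use the separation of $T$ by the vertex $a$ into components, while the paper uses closest-point projections onto $[f(u),f(v)]$ and takes the last index projecting into $[f(u),a]$; both devices are sound.
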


\begin{figure}[h]
    \centering
    \includegraphics[width=.6\textwidth]{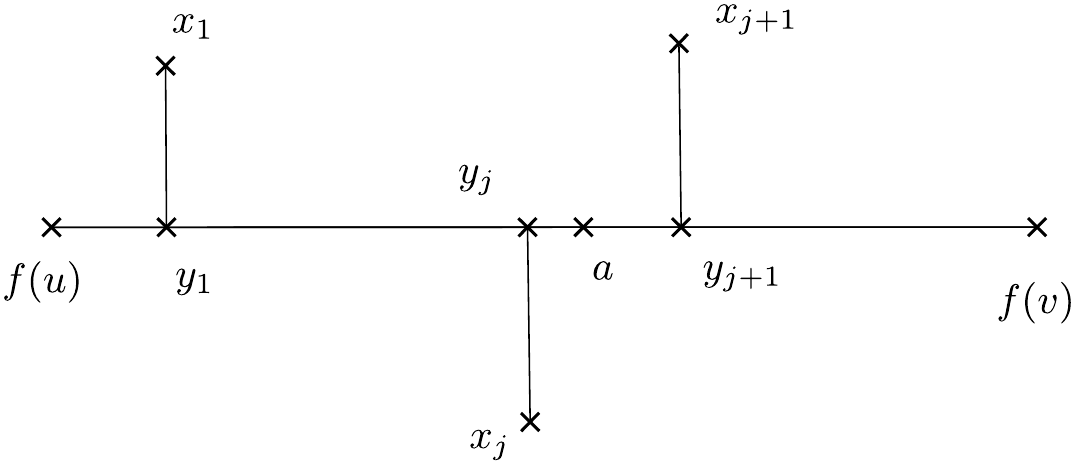}
    \caption{Images of geodesics coarsely surject onto the geodesic.}
    \label{fig:path}
\end{figure}

\begin{proof}
Let $[u, v] = (u_0, \ldots , u_n)$. For $0\leq i \leq n$, define $x_i = f(u_i)$ and let $y_i$ be the closest point projection of $x_i$ onto $[f(u), f(v)]$. This is depicted in Figure \ref{fig:path}. 
Let $j$ be the largest index such that $y_j\in [f(u), a]$. Then the path $[x_j, y_j][y_j, y_{j+1}][y_{j+1}, x_{j+1}]$ is non-backtracking and hence a geodesic from $x_j$ to $x_{j+1}$ going through $a$. Since $f$ is a $C$-quasi-isometry, $ d(x_j, a)+d(a, x_{j+1}) = d(x_j ,x_{j+1}) \leq 2C$. So $\min\{d(x_j, a)+d(a, x_{j+1})\}\leq C$. 
\end{proof}

The following lemma states that every quasi-isometry between a rooted tree and itself is at bounded distance from an order-preserving quasi-isometry. This extends the result of \cite{Nairne} where this is shown for $(1, C)$-quasi-isometries between spherically homogeneous trees.

\begin{lemma}\label{lemma:close_to_growingQI}
Let $T$ be a tree rooted at $v_0$ and let $f:T \to T$ be a $C$-quasi-isometry. The map $f$ is at bounded distance from an order-preserving quasi-isometry. Moreover, if $f(v_0) = v_0$, then $f$ is at $K$-bounded distance from an order-preserving $(2K+C)$-quasi-isometry for some $K$ depending only on $C$.
\end{lemma}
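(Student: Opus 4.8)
The plan is to define the order-preserving map by the coarse ``lowest common ancestor'' rule
\[
 g(v) \;:=\; LCA\big(f(T_v)\big),
\]
and to verify the two required properties separately. I will concentrate on the case $f(v_0)=v_0$, which carries the quantitative claim; the qualitative first assertion then follows from the same construction.

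\emph{Order-preservation} is essentially formal. If $w\in T_v$ then $T_w\subseteq T_v$, hence $f(T_w)\subseteq f(T_v)$, and I would use that $LCA$ is monotone under inclusion: any common ancestor of the larger set $f(T_v)$ is a common ancestor of $f(T_w)$, and so is an ancestor of $LCA(f(T_w))$. Applying this to the vertex $LCA(f(T_v))$ itself shows that $g(w)=LCA(f(T_w))\in T_{g(v)}$, which is exactly the order-preserving condition.

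\emph{Bounded distance} is where the work lies. Since $f(v)\in f(T_v)$, the vertex $g(v)$ is an ancestor of $f(v)$; write $D:=d(f(v),g(v))$. Because $g(v)$ is the \emph{lowest} common ancestor, the child $z$ of $g(v)$ pointing towards $f(v)$ does not dominate all of $f(T_v)$, so there is a \emph{branching witness} $w\in T_v$ with $f(w)\notin T_z$. For such a $w$ the geodesic $[f(v),f(w)]$ climbs to $g(v)$ and then branches away, so $g(v)=LCA(f(v),f(w))$; and since (using $f(v_0)=v_0$) the geodesic $[v_0,f(w)]$ is exactly the ancestor path of $f(w)$, the vertex $g(v)$ is the nearest-point projection of $f(v)$ onto $[v_0,f(w)]$, whence $D=d\big(f(v),[v_0,f(w)]\big)$. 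Now $w\in T_v$ forces $v$ onto the geodesic $[v_0,w]$, so $f(v)$ is a point of the $C$-quasigeodesic $f([v_0,w])$ joining $v_0=f(v_0)$ to $f(w)$. The crucial input is the stability of quasigeodesics in the $0$-hyperbolic tree $T$: such a quasigeodesic stays within a distance $K=K(C)$ of the geodesic $[v_0,f(w)]$ between its endpoints (the inclusion complementary to Lemma \ref{lemma:close_to_geodesic}). Hence $D\le K$, so $f$ and $g$ are $K$-bounded with $K=K(C)$.

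Finally I would package the estimate. A map that is $K$-bounded from a $C$-quasi-isometry is itself coarsely surjective and satisfies $\frac{1}{2K+C}\,d(x,y)-(2K+C)\le d(g(x),g(y))\le (2K+C)\,d(x,y)+(2K+C)$, so $g$ is an order-preserving $(2K+C)$-quasi-isometry, yielding the \emph{moreover}; the qualitative first assertion comes from the same construction. \emph{The main obstacle is the bounded-distance estimate}, i.e. controlling how far $LCA(f(T_v))$ can lie above $f(v)$. Everything hinges on quasigeodesic stability, and the delicate point is to produce the branching witness $w$ \emph{below} $v$, so that $v$, and hence $f(v)$, genuinely sits on the quasigeodesic $f([v_0,w])$ to which stability is applied.
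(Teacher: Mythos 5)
Your construction is the same as the paper's --- both define $g(v)=LCA(f(T_v))$ and check order-preservation formally --- but your bounded-distance argument takes a genuinely different route, and it is correct. The paper stays self-contained: it applies its Lemma \ref{lemma:close_to_geodesic} twice, once to $[v_0,u]$ to get $w$ an ancestor of $u$ with $f(w)$ within $C$ of $g(u)$, and once to a geodesic $[x,y]$ between a witness pair $x,y\in T_u$ with $g(u)\in[f(x),f(y)]$ to get a descendant $z$ of $u$ with $f(z)$ within $C$ of $g(u)$; since $u\in[w,z]$, the quasi-isometry inequality bounds $d(u,z)$ and hence $d(f(u),g(u))$, giving the explicit constant $K=3C^3+2C$. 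You instead produce a single branching witness $w\in T_v$ with $f(w)$ outside the branch of $g(v)$ containing $f(v)$, identify $d(f(v),g(v))$ with $d(f(v),[v_0,f(w)])$, and invoke stability of quasigeodesics applied to $f([v_0,w])$. That is a clean reduction, but note that the stability you need --- the image of the quasigeodesic lies in a neighbourhood of the geodesic --- is precisely the inclusion \emph{not} proved in the paper (Lemma \ref{lemma:close_to_geodesic} gives only the reverse containment), so your proof imports an external fact where the paper does not. It is a standard and easy fact in a tree (any subpath of $f([v_0,w])$ entering the branch of $f(v)$ must enter and exit through its root, so the entering and exiting points are at distance at most $2$, bounding the parameter length of the excursion and hence the depth of the excursion by roughly $C^2(C+2)+C$), but for a self-contained write-up you should state and prove it. Your approach buys a conceptually cleaner picture (one witness plus Morse stability) at the cost of an extra lemma; the paper's buys self-containedness and an explicit constant.
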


\begin{figure}
    \centering
    \includegraphics{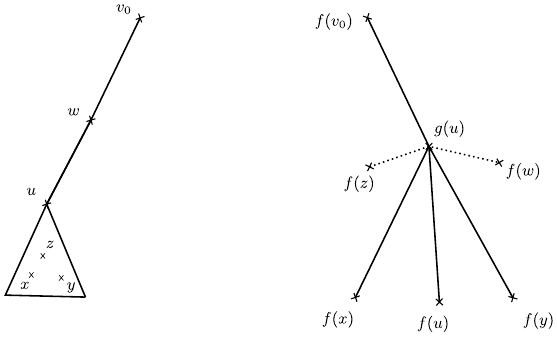}
    \caption{Quasi-isometries are at bounded distance from order-preserving quasi-isometries.}
    \label{fig:close_to_growing}
\end{figure}
\begin{proof}
It suffices to show the moreover part with $K =  3C^3+2C$. Define $g: T\to T$ via $g(v) := LCA(f(T_v))$. Clearly, $g$ is order-preserving. It remains to show that $g$ is at $K$-bounded distance from $f$ since it then follows that $g$ is a $(2K+C)$-quasi-isometry .

Let $u\in T$ be a vertex, we will show that $d(f(u),g(u)) \leq K$. We have $f(u)\in T_{g(u)}$, thus by Lemma \ref{lemma:close_to_geodesic}, there exists $w\in [v_0, u]$ such that $d(f(w), g(u))\leq C$. This is depicted in Figure \ref{fig:close_to_growing}. Since $g(u) = LCA(f(T_u))$, there exist vertices $x, y\in T_u$ such that $g(u)\in [f(x), f(y)]$.  Again by Lemma \ref{lemma:close_to_geodesic}, there exists a vertex $z\in [x, y]\subset T_u$ with $d(g(u), f(z))\leq C$. In particular, $d(f(w), f(z))\leq 2C$.

Observe that $u\in [w, z]$. Hence, $d(u, z)\leq d(w, z)\leq 3C^2$. Therefore, $d(g(u), f(u))\leq d(g(u), f(z))+d(f(z), f(u))\leq 3C^3+2C = K$.
\end{proof}

The following lemma states that if $f$ is an order-preserving quasi-isometry and two vertices $u, v$ have the same distance from the root $f(u)$ cannot be a descendant of $f(v)$, unless they are close. This lemma is a key ingredient in the proof of Lemma \ref{lemma:every_qi_is_mixed}.

\begin{lemma}\label{lemma:same_dist_properties}
Let $T$ be a tree rooted at $v_0$ and let $f: T \to T$ be an order-preserving $C$-quasi-isometry. Let $u, v\in T$ be vertices such that $d(v_0, u) = d(v_0, v)$ and $f(u)\in T_{f(v)}$. Then $d(f(u), f(v))\leq K$ and $d(u, v)\leq K$ for some constant $K$ depending only on $C$.
\end{lemma}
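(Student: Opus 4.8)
The plan is to reduce both estimates to controlling the single quantity $\ell := d(m, u) = d(m, v)$, where $m := LCA(\{u, v\})$. Here the hypothesis $d(v_0, u) = d(v_0, v)$ is essential: it forces $u$ and $v$ to sit at the same depth below their common ancestor $m$, so that $d(u, v) = 2\ell$. If $u = v$ there is nothing to prove, so I assume $u \neq v$, in which case $u$ and $v$ lie in \emph{distinct} branches of $m$.

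First I would record the order structure of the images. Since $f$ is order-preserving and $m$ is an ancestor of both $u$ and $v$, we have $f(u), f(v) \in T_{f(m)}$; combined with the hypothesis $f(u) \in T_{f(v)}$, this shows that $f(m), f(v), f(u)$ are pairwise comparable, with $f(v)$ lying on the geodesic $[f(m), f(u)]$. The crucial step is then to pull this configuration back into $T$. Applying Lemma \ref{lemma:close_to_geodesic} to the pair $(u, m)$ with the point $a = f(v) \in [f(u), f(m)]$ produces a vertex $b \in [u, m]$ with $d(f(b), f(v)) \leq C$. The lower quasi-isometry bound converts this into $d(b, v) \leq 2C^2$. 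But $b$ lies on $[u, m]$ while $v$ lies in the opposite branch of $m$, so the geodesic $[b, v]$ must pass through $m$, giving $d(b, v) \geq d(m, v) = \ell$. Hence $\ell \leq 2C^2$, so $d(u, v) = 2\ell \leq 4C^2$, and the upper quasi-isometry bound then yields $d(f(u), f(v)) \leq 4C^3 + C$. Taking $K = 4C^3 + C$ settles both inequalities.

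The step I expect to be the main obstacle — and the one that drives the whole argument — is the observation that $f(v)$ must sit on the geodesic $[f(m), f(u)]$, followed by the use of coarse surjectivity (Lemma \ref{lemma:close_to_geodesic}) to manufacture a preimage of $f(v)$ that is \emph{forced} onto the $u$-side of $m$. It is precisely the tension between ``$b$ maps close to $f(v)$'' and ``$b$ lies in the wrong branch'' that caps $\ell$; once this is set up, everything else is routine tracking of the quasi-isometry constants.
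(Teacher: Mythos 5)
Your proposal is correct and follows essentially the same route as the paper's proof: both take $m = LCA(\{u,v\})$, observe that order-preservation forces $f(v)$ onto $[f(m), f(u)]$, apply Lemma \ref{lemma:close_to_geodesic} to obtain a point $b \in [u,m]$ with $d(f(b), f(v)) \leq C$, and use the fact that $[b,v]$ passes through $m$ to bound $d(m,v) \leq 2C^2$, arriving at the same constant $K = 4C^3 + C$. The only difference is that you spell out the branch-separation argument slightly more explicitly than the paper does.
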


\begin{figure}[h]
    \centering
    \includegraphics[width=.4\textwidth]{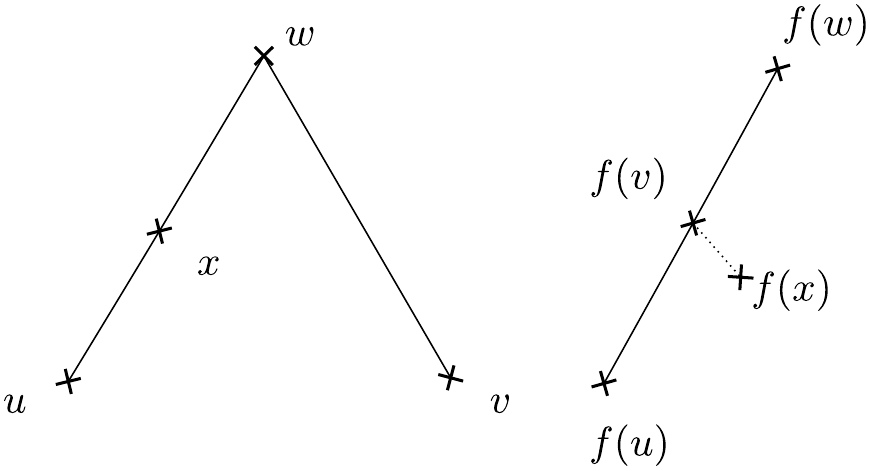}
    \caption{Illustration of the proof of Lemma \ref{lemma:same_dist_properties}.}
    \label{fig:same_height}
\end{figure}

\begin{proof}
Let $ w = LCA(\{u, v\})$. Since $f$ is order-preserving, $f(v)$ lies on $[f(u), f(w)]$. This is depicted in Figure \ref{fig:same_height}. By Lemma \ref{lemma:close_to_geodesic}, there exists a vertex $x\in [u,w]$ such that $d(f(x), f(v))\leq C$. Thus $d(w, v)\leq d(x,v )\leq 2C^2$. Since $d(v_0, u) = d(v_0, v)$, $d(u, v) = 2d(w, v)$ and hence $d(f(u), f(v))\leq 4C^3 + C$. So choosing $K = 4C^3 + C$ works.
\end{proof}

\section{Quasi-isometries of regular trees}\label{sec:qi_of_reg_trees}
\textbf{Notation:} For the rest of this section, $T$ denotes a regular tree of degree $d \geq 3$ rooted at a vertex $v_0$.\\

In this section, we describe a way of building quasi-isometries, which we call \emph{mixed-subtree quasi-isometries}, of regular trees to themselves. We further show that any quasi-isometry is at bounded distance from a mixed-subtree quasi-isometry. The key idea behind mixed-subtree quasi-isometries is that they are quasi-isometries which are defined iteratively for vertices further and further away from the root. Moreover, at each step, the allowed choices are in some sense independent from the choices for earlier vertices.

\textbf{Construction:} Let $D \geq 1$ be a natural number. For all natural numbers $i\geq 0$ we inductively construct functions $f_i : T_{v_0}^{iD}\to T$. Define $f_0(v_0) = v_0$. Assuming we have defined $f_{i}$, we define $f_{i+1}$ as follows. 

\begin{figure}
    \centering
    \includegraphics{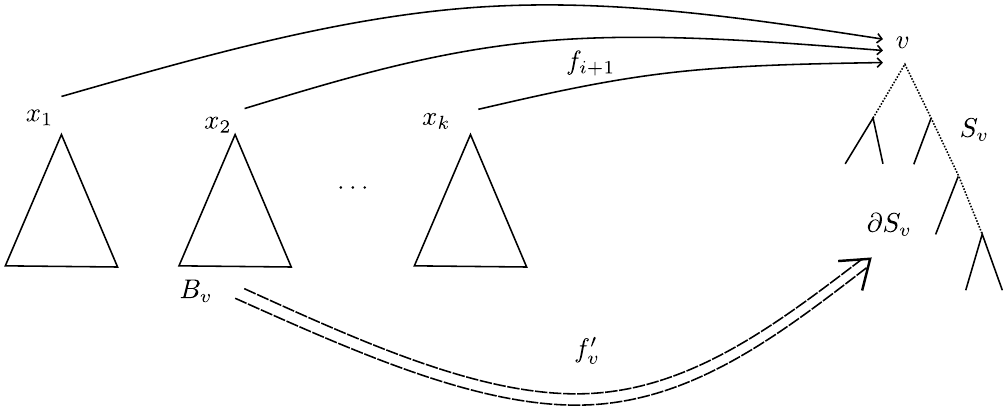}
    \caption{Definition of $f'$}
    \label{fig:mixed_subtree}
\end{figure}

\begin{itemize}
     \item For all vertices $x\in T_{v_0}^{iD}$ define $f_{i+1}(x) = f_i(x)$.
     \item Iterate through all vertices $x\in T$ with $d(v_0, x) = iD$. If we have not yet defined $f_{i+1}$ for any descendants of $x$, do the following. 
     \begin{itemize}
        \item Denote $f_{i}(x)$ by $v$ and let $X  = \{x_1,\ldots x_k\}$ be the set of vertices that satisfy $f_i(x_j) = v$ and $d(v_0, x_j) = iD$. Define $B_v$ as the set of all $D$-children of vertices $x_j\in X$. We now define $f_{i+1}(h)$ for all vertices $h\in B_v$. 
        \item Choose any function $f_v':B_v \to T_v$ satisfying the following properties, see Figure \ref{fig:mixed_subtree}.
        \begin{enumerate}
                \item\label{item:image_is_boundary_subtree}  $\mathrm{Im}(f_v')=\partial S_v$ for some finite subtree $S_v$ of $T_v$ containing $v$. 
                \item\label{item:kind_of_injective}  If $f_v'(w) = f_v'(w')$, then $w$ and $w'$ are $D$-children of the same vertex $x_j\in X$.
        \end{enumerate}
        \item Define $f_{i+1}|_{B_v} = f_v'$.
        \item For all $x_j\in X$, define $f_{i+1}(w) = v$ for all vertices $w\in T_{x_j}^{D-1}$.
     \end{itemize}
\end{itemize}

We first argue that there always exists at least one function $f_v'$ satisfying \eqref{item:image_is_boundary_subtree} and \eqref{item:kind_of_injective}. In other words, we have to show that there exists a subtree $S_v$ rooted at $v$ such that $\abs{X}\leq \abs{\partial S_v}\leq \abs{B_v}$. If $D = 1$ and $\abs{X} = 1$, then one can choose $S_v = \{v\}$ to get $\abs{B_v} = \abs{\partial S_v}$. Otherwise $\abs{B_v} - \abs{X}\geq d - 1$ hence by Remark \ref{rmk:isoperimetry_tree} we can find a subtree $S_v$ rooted at $v$ with $\abs{X}\leq \abs{\partial S_v}\leq \abs{B_v}$.
 
Further note that with this definition for every $i, j\in \mathbb{N}$, $f_i$ and $f_j$ agree if they are both defined. Hence we can define $f: T\to T$ via $f(v) = f_i(v)$ for some $i$ where $v$ is in the domain of $f_i$. We call any map $f$ constructed this way a \emph{$D$-deep mixed-subtree} quasi-isometry.

The following lemma shows that mixed-subtree quasi-isometries are indeed quasi-isometries. 

\begin{lemma}\label{lemma:is_qi}
For any choice of functions $f_v'$, the map $f$ constructed is an order-preserving $C$-quasi-isometry, where $C$ only depends on $D$ and $T$.
\end{lemma}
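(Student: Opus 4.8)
The plan is to verify the three defining properties—order preservation, the two-sided quasi-isometry inequality, and coarse density—after first extracting a single uniform bound that governs the entire construction. Order preservation is immediate: it suffices to check that $f(c)\in T_{f(u)}$ whenever $c$ is a child of $u$, since the general statement then follows by transitivity along a descending path. According to the position of $u$ within its block of $D$ levels, $f(c)$ is either equal to $f(u)$ (when $c$ is an intermediate vertex) or lies in $\partial S_{f(u)}\subseteq T_{f(u)}$ (when $c$ is a $D$-child), so in all cases $f(c)\in T_{f(u)}$.

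The key structural input is a uniform bound $R=R(d,D)$ on the radius $\max_{w\in\partial S_v}d(v,w)$ of every subtree used in the construction. I will prove by induction on $i$ that the set $L_i:=f(\{v:d(v_0,v)=iD\})$ is an antichain and that the subtrees $\{T_v:v\in L_i\}$ are pairwise disjoint; the inductive step uses that the boundary vertices of a subtree are pairwise incomparable together with $L_i=\bigcup_{v\in L_{i-1}}\partial S_v$. Granting this, each $v\in L_i$ lies in a unique $\partial S_{\tilde v}$, so the class $X$ attached to $v$ is a single fibre $(f'_{\tilde v})^{-1}(v)$; by \eqref{item:kind_of_injective} its elements are $D$-children of one vertex, whence $\abs{X}\le d(d-1)^{D-1}$ and $\abs{B_v}=\abs{X}(d-1)^D$ is bounded independently of $v$. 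Combining this with \eqref{item:image_is_boundary_subtree} and the isoperimetric identity of Remark \ref{rmk:isoperimetry_tree} bounds $\abs{S_v}$, hence $R$, uniformly.

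With $R$ in hand the upper bound is routine: on a geodesic each edge is either collapsed by $f$ or has its endpoints sent within $\partial S_v$ of the relevant root $v$, so each edge contributes at most $R$ and $d(f(u),f(w))\le R\,d(u,w)$. Coarse density follows because $\mathrm{Im}(f)=\bigcup_i L_i$ and the finite subtrees $\Sigma_n=S_{v_0}\cup\bigcup_{v\in L_1}S_v\cup\cdots$ exhaust $T$ with $\partial\Sigma_n=L_n$; every vertex lies in some $S_v$ with $v\in L_n\subseteq\mathrm{Im}(f)$, hence within $R$ of the image.

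The heart of the argument—and the step I expect to be the main obstacle—is the lower bound. I will first record that along any descending path the image moves strictly away from $v_0$ at each level divisible by $D$, since $\partial S_v$ sits strictly below $v$; this gives $d(f(a),f(u))\ge d(a,u)/D-1$ for any descendant $u$ of $a$. Writing $a=LCA(\{u,w\})$ and $b=LCA(\{f(u),f(w)\})$, order preservation makes $f(a)$ an ancestor of $b$, so $d(f(u),f(w))=d(f(u),f(a))+d(f(a),f(w))-2\,d(f(a),b)$, and it remains only to bound $d(f(a),b)$ by a constant. This is precisely where \eqref{item:kind_of_injective} does the real work: the two paths from $a$ to $u$ and $w$ have distinct level-$(i+1)D$ vertices that are $D$-children of the \emph{same} level-$iD$ vertex, so their images may still collide in one boundary set $\partial S_v$; but then at the next block their level-$(i+2)D$ vertices are $D$-children of \emph{distinct} members of a single class, and \eqref{item:kind_of_injective} forces their images to differ—hence to be incomparable, lying in a common $\partial S$. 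Tracking this shows $b$ lies within a bounded distance of $f(a)$, yielding $d(f(u),f(w))\ge d(u,w)/D-C_0$. Assembling the three estimates produces a $C$-quasi-isometry with $C=C(d,D)$, and the hypothesis $d\ge 3$ enters through Remark \ref{rmk:isoperimetry_tree}, both to guarantee $\abs{B_v}-\abs{X}\ge d-1$ and to make the boundaries grow.
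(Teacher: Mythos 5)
Your proposal is correct and follows essentially the same route as the paper's proof: order-preservation from the definition, an inductive disjointness claim for the images of the level-$iD$ vertices (the paper's Claim 1, which you phrase as the antichain property of $L_i$), a resulting uniform bound on fibre sizes and on the radius of each $S_v$ (Claims 2 and 3), coarse surjectivity via the exhaustion by the $S_v$ (Claim 4), the upper bound edge-by-edge, and the lower bound by tracking $D$-level ancestors and allowing one block of collision before \eqref{item:kind_of_injective} forces the images into disjoint subtrees (the paper's ``$k+2$''). The only differences are organizational, e.g.\ your explicit $LCA$ identity in the lower bound versus the paper's direct count of ancestors lying on the geodesic $[f(u),f(v)]$.
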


\begin{proof}
    It follows directly from the definition that $f$ is order-preserving. Let $K = d^D$, where $d$ is the degree of $T$ and let $C = 2 K^2$. We will show that $f$ is a $C$-quasi-isometry.

    \begin{claim}\label{Claim 1}
        If vertices $b, b'$ are $D(i+1)$-children of $v_0$, then $f(b)\neq f(b')$ unless the $D$-parents of $b$ and $b'$ are the same. Furthermore, if $f(b)\neq f(b')$, then $T_{f(b)}$ and $T_{f(b')}$ are disjoint.
    \end{claim}
    \textit{Proof of Claim \ref{Claim 1}.} We prove this by induction on $i$. For $i=0$, $b$ and $b'$ have the same $D$-parent, namely $v_0$. The furthermore part follows from \eqref{item:image_is_boundary_subtree}. Assume the statement is true for $i$, we want to show that it holds for $i+1$. Let $x$ and $x'$ be the $D$-parents of $b$ and $b'$ respectively. If $f(x)\neq f(x')$, then $T_{f(x)}$ is disjoint from $T_{f(x')}$ by the induction hypothesis. Hence $f(b)\neq f(b')$ and $T_{f(b)}$ is disjoint from $T_{f(b')}$. If $f(x) = f(x')$ and $x\neq x'$, then \eqref{item:kind_of_injective} implies that $f(b)\neq f(b')$. Moreover,  \eqref{item:image_is_boundary_subtree} implies that $T_{f(b)}$ and $T_{f(b')}$ are disjoint. Lastly, if $x = x'$, we only have to show the furthermore part, which follows from \eqref{item:image_is_boundary_subtree}. \hfill$\blacksquare$

    \begin{claim}\label{Claim 2}
        For any $i$, the number of $Di$-children of $v_0$ whose image under $f$ coincide is at most $K$.
    \end{claim}
    \textit{Proof of Claim \ref{Claim 2}.} This follows from Claim \ref{Claim 1} together with the fact that every vertex has at most $K$ $D$-children. \hfill$\blacksquare$

    \begin{claim}\label{Claim 3}
        If $b$ is the $D$-child of a vertex $x$ which in turn is a $Di$-child of $v_0$, then $1\leq d(f(b), f(x))\leq K^2$.
    \end{claim}
    \textit{Proof of Claim \ref{Claim 3}.} Let $v = f(x)$. We use the notation from the construction of $f_{i+1}$. By Claim \ref{Claim 2}, the set $B_v$ contains at most $K^2$ vertices, so $\abs{\mathrm{Im}(f_v')}\leq K^2$. In other words the subtree $S_v$ from \eqref{item:image_is_boundary_subtree} has at most $K^2$ leaves, implying that $d(v, v')\leq K^2$ for any vertex $v'\in \partial S_v$ (see Remark \ref{rmk:isoperimetry_tree}). Consequently $d(f_v'(b), f(x))\leq K^2$, which concludes the proof. \hfill$\blacksquare$

    \begin{claim}\label{Claim 4}
        The map $f$ is $K^2$-coarsely surjective.
    \end{claim}
     \textit{Proof of Claim \ref{Claim 4}.} 
     First observe that whenever a vertex $v$ is in the image of $f$, there exists a $Di$-child $x$ of $v_0$ with $f(x) = v$. 
     
     Let $v'\in T$ be a vertex. We show that $d(v', \mathrm{Im}(f))\leq K^2$. Let $v$ be the lowest ancestor of $v'$ which is in the image of $f$. We have that $v = f(x)$ for some vertex $x$ which is a $Di$ child of $v_0$. If $v' = v$, we are done. If $v'\in S_v$, then $d(v, v')\leq K^2$ as in the proof of Claim \ref{Claim 3}. If $v'\not\in S_v$, there exists $w\in \partial S_v$ which is a descendant of $v$ and an ancestor of $v'$. Since $w\in \partial S_v$, it is in the image of $f$, a contradiction with the definition of $v$.  \hfill$\blacksquare$

    It remains to show that 
    \begin{align*}
        \frac{d(u, v)}{C} - C\leq d(f(u), f(v))\leq C d(u, v) + C
    \end{align*}
    for all vertices $u, v\in T$. To show the right half of the inequality, it is enough to show that for all neighbours $u, v\in T$, we have $d(f(u), f(v))\leq C$. This follows directly from the definition of $f$ and Claim \ref{Claim 3}. Next we show the left half of the inequality. Let $u, v\in T$ be vertices and let $n = \floor{d(v_0, u)/D}, m = \floor{d(v_0, v)/D}$. Define $u_0 = v_0$ and for $i\leq n$ define $u_i$ as the $Di$-child of $v_0$ which is an ancestor of $u$. Define $v_i$ analogously. Let $k$ be the maximal index such that $u_k = v_k$. Claim \ref{Claim 1} together with $f$ being order-preserving yield that $f(u_{i})$ and $f(v_j)$ lie on the geodesic from $f(u)$ to $f(v)$ for all $k+2\leq i \leq n$ and $k+2\leq j \leq m$. Hence, $d(f(u), f(v))\geq (n - k - 2)+(m - k-2)$. On the other hand $d(u, v) \leq D(n - k +1) + D(m - k +1 )$. The statement follows. 
\end{proof}

We are now ready to prove the following lemma which together with Lemma \ref{lemma:is_qi} states that a map $g : T\to T$ is a quasi-isometry if and only if it is at bounded distance from a mixed-subtree quasi-isometry. The lemma is a slightly more detailed version of Theorem \ref{thm:main}.

\begin{lemma}\label{lemma:every_qi_is_mixed}
Let $g : T\to T$ be a $C$-quasi-isometry. There exists a constant $D>0$ and a $D$-deep mixed subtree quasi-isometry $f$ such that $g$ and $f$ are at bounded distance. Moreover, if $g(v_0) = v_0$, then $D$ only depends on $T$ and $C$.
\end{lemma}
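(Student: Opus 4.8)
The plan is to build $f$ level-by-level so that it follows the mixed-subtree template of this section while staying within a fixed distance $L$ of $g$, where $L$ depends only on $C$, $D$ and $T$. Throughout I invoke Lemma \ref{lemma:is_qi} only at the very end, to certify that the object produced is a quasi-isometry.

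First I would reduce to an order-preserving map fixing the root. By Lemma \ref{lemma:close_to_growingQI} there is an order-preserving quasi-isometry $g'$ at bounded distance from $g$, and since bounded distance is transitive it suffices to produce a mixed-subtree quasi-isometry close to $g'$. Here $g'(v_0) = LCA(\mathrm{Im}(g))$, which is forced to be $v_0$ by coarse surjectivity of $g$: the image is uniformly dense, hence meets at least two of the $d\ge 3$ subtrees hanging off $v_0$ arbitrarily deeply, so their lowest common ancestor is $v_0$. In the moreover case $g(v_0)=v_0$, Lemma \ref{lemma:close_to_growingQI} gives constants depending only on $C$, so the quasi-isometry constant $C'$ of $g'$, and hence the $D$ chosen below, depends only on $C$ and $T$; in general $C'$ may also depend on $d(g(v_0),v_0)$. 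From now on write $g$ for $g'$, an order-preserving $C'$-quasi-isometry fixing $v_0$, and fix $D$ to be a large multiple of $C'$ and of the constant $K$ of Lemma \ref{lemma:same_dist_properties}.

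I would then construct the functions $f_i : T_{v_0}^{iD}\to T$ of the template by induction on $i$, maintaining the invariant $d(f_i(x), g(x))\le L$ for every $x$ with $d(v_0,x)\le iD$. Given $f_i$, I process each $v$ in the image of the level-$iD$ restriction: with $X=\{x_1,\dots,x_k\}$ and $B_v$ as in the construction, I must produce a subtree $S_v\ni v$ of $T_v$ and a function $f_v':B_v\to\partial S_v$ satisfying \eqref{item:image_is_boundary_subtree}, \eqref{item:kind_of_injective} and $d(f_v'(h),g(h))\le L$. Setting $f_{i+1}|_{B_v}=f_v'$ and collapsing the intermediate balls $T_{x_j}^{D-1}$ to $v$ as in the template then extends the invariant: a collapsed vertex is within $D$ of some $x_j$, whose image is within $L$ of $v$, so its error is at most $L+D+C'D+C'$, which I fold into $L$. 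The crux is the pair $(S_v,f_v')$. For a $D$-child $h$ of $x_j$, order-preservation gives $g(h)\in T_{g(x_j)}$ with $D/C'-C'\le d(g(x_j),g(h))\le C'D+C'$ and $d(g(x_j),v)\le L$; let $a_h$ be the closest vertex of $T_v$ to $g(h)$, so $d(a_h,g(h))\le L$. I would take $S_v$ to be a bounded enlargement of the convex hull of $\{v\}\cup\{a_h:h\in B_v\}$ in $T_v$, arranged so that each $a_h$ is within bounded distance of a vertex of $\partial S_v$, and set $f_v'(h)$ to be such a boundary vertex. Condition \eqref{item:kind_of_injective} follows from Lemma \ref{lemma:same_dist_properties}: $D$-children of distinct $x_j,x_{j'}$ are at mutual distance $\ge 2D>K$, so their $g$-images are incomparable and lie in different subtrees below $v$, hence can be routed to distinct boundary vertices; collisions are thereby confined to the children of a single $x_j$, where they cost at most the diameter $2(C'D+C')$ of $g$ on those children, which is bounded. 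The size condition $|X|\le|\partial S_v|\le|B_v|$ needed for a valid $f_v'$ is exactly the isoperimetric count of Remark \ref{rmk:isoperimetry_tree}, and the mismatch between $|B_v|$ and the nearest achievable boundary size $\equiv 1\pmod{d-2}$ forces at most $d-2$ extra collisions, again absorbable within one $x_j$'s children.

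The resulting $f$ is a $D$-deep mixed-subtree quasi-isometry by construction, hence a quasi-isometry by Lemma \ref{lemma:is_qi}, and the invariant gives $d(f,g)\le L$; in the root-fixing case all constants, in particular $D$ and $L$, depend only on $C$ and $T$. I expect the main obstacle to be the crux of the previous paragraph: simultaneously forcing the image to be exactly a boundary $\partial S_v$, keeping distinct $x_j$ separated, and moving every point only a bounded amount. The delicate point is that the vertices $a_h$ need not themselves form a boundary, so deeply nested images must be pushed out to $\partial S_v$; one must show, using that $g$ is a quasi-isometry so that the images of a $D$-ball coarsely fill and coarsely cover a region of radius $\approx D$ (via Lemma \ref{lemma:close_to_geodesic}), that the hull can be enlarged to a subtree whose boundary stays uniformly close to the $a_h$. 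Making this rounding uniform in $v$ and $i$, and checking it is compatible with the separation of the $x_j$, is where the real work lies.
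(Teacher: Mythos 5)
Your high-level strategy matches the paper's: reduce to an order-preserving map via Lemma \ref{lemma:close_to_growingQI}, fix $D$ in terms of $C$ and the constant $K$ of Lemma \ref{lemma:same_dist_properties}, build the $f_i$ level by level maintaining a bounded-distance invariant, and verify \eqref{item:image_is_boundary_subtree} and \eqref{item:kind_of_injective} for each $f_v'$. But the step you yourself flag as ``where the real work lies'' --- producing a subtree $S_v$ whose boundary is \emph{exactly} the image of $f_v'$ while moving each $g(h)$ only a bounded amount --- is the actual content of the lemma, and your proposal leaves it unresolved. ``A bounded enlargement of the convex hull of $\{v\}\cup\{a_h\}$, arranged so that each $a_h$ is within bounded distance of a vertex of $\partial S_v$'' is a wish, not a construction: you would still need to show that nested images can be pushed to the boundary uniformly, that the resulting assignment is surjective onto $\partial S_v$, and that the pushing does not merge images of children of distinct $x_j$. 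That is a genuine gap. The paper closes it with a single definition that makes \eqref{item:image_is_boundary_subtree} hold automatically: set $A_v=g(B_v)$, define $f_v'(b)$ to be the element of $A_v$ \emph{closest to the root} whose subtree contains $g(b)$, and take $S_v=\{y\in T_v: y\notin T_a \text{ for all } a\in A_v\}$. Then $\partial S_v$ is precisely the set of minimal elements of $A_v$, which is precisely $\mathrm{Im}(f_v')$; the displacement $d(f_v'(b),g(b))\leq K$ is immediate from Lemma \ref{lemma:same_dist_properties} because $f_v'(b)=g(b')$ for some $b'$ on the same level with $g(b)\in T_{g(b')}$; and the only nontrivial checks left are that $S_v$ is finite (coarse surjectivity of $g$) and that $v\notin A_v$. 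In other words, one pushes nested images \emph{up} to a minimal element of $A_v$ rather than \emph{out} to the boundary of a hull, and the rounding problem disappears.

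A second, related omission: your induction keeps only the metric invariant $d(f_i(x),g(x))\leq L$, whereas the paper also maintains the containment $g(x)\in T_{f_i(x)}$ at each level $Di$. This containment is what guarantees $A_v\subseteq T_v$, so that $f_v'$ genuinely lands in $T_v$ and the minimal-ancestor definition above is available; without it your $a_h$ (``closest vertex of $T_v$ to $g(h)$'') introduces extra slack that would contaminate the separation argument for \eqref{item:kind_of_injective}. (With the containment invariant one in fact has $g(h)\in T_v$ already, so no projection is needed.) Your treatment of \eqref{item:kind_of_injective} via $d(b,b')\geq 2D$ and Lemma \ref{lemma:same_dist_properties} is the right idea and matches the paper, and your worry about matching $|B_v|$ to an achievable boundary size is a red herring --- condition \eqref{item:image_is_boundary_subtree} only asks for surjectivity onto $\partial S_v$, not a bijection, and with the paper's definition the counting takes care of itself.
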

\begin{proof}
By Lemma \ref{lemma:close_to_growingQI}, which states that all quasi-isometries are at bounded distance from order-preserving quasi-isometries, it suffices to show the moreover part for an order-preserving quasi-isometry. So we assume in the following that $g$ is order-preserving. 

Let $K$ be the constant of Lemma \ref{lemma:same_dist_properties} and let $D = \ceil{C(C+K)+1}$. We will show that there is a $D$-deep mixed subtree quasi-isometry $f$ at distance $K+CD+C$ from $g$.

Assume that we have defined $f_i: T_{v_0}^{iD} \rightarrow T$, as in the construction, such that
\begin{enumerate}[label = \roman*)]
    \item $d(f_i(u), g(u))\leq K$, for all $u$ with $d(v_0, u) = Di$,\label{cond:boundedhyp1}
    \item $g(u)\in T_{f_i(u)}$, for all $u$ with $d(v_0, u) = Di$,\label{cond:subsethyp}
    \item $d(f_i(w), g(w))\leq K + CD + C$ for all $w\in T_{v_0}^{iD}$. \label{cond:bound2}
\end{enumerate}
We show that we can define a function $f_{i+1}$ such that
\begin{enumerate}
    \item[a)] $d(f_{i+1}(u), g(u))\leq K$, for all $u$ with $d(v_0, u) = D(i+1)$, \label{cond:boundedind}
    \item[b)] $g(u)\in T_{f_{i+1}(u)}$, for all $u$ with $d(v_0, u) = D(i+1)$, \label{cond:subsetind1}
    \item[c)] $d(f_{i+1}(w), g(w))\leq K + CD + C$ for all $w\in T_{v_0}^{(i+1)D}$.
\end{enumerate}

Let $x$ be a $Di$ child of $v_0$, let $v = f_i(x)$ and let $X = f_i^{-1}(v)$. Observe that for all $x'\in X$, $d(v_0, x') = Di$. Let $B_v$ be the set of all $D$-children of elements of $X$ and let $A_v = g(B_v)$. By \ref{cond:subsethyp},  $A_v\subset T_v$. For $b\in B_v$, define $f_v'(b)$ as the vertex $a\in A_v$ closest to $v_0$ which satisfies $g(b)\in T_{a}$. Observe that $g(b)\in T_{f_v'(b)}$, in other words, b) is satisfied.

Note that $f_v'(b) = g(b')$ for some $b'\in B_v$. It follows from Lemma \ref{lemma:same_dist_properties} that $d(f_v'(b), g(b))\leq K$ for all $b\in B_v$, which proves a). Therefore, $g|_{B_v}$ and $f_v'$ are at $K$-bounded distance. By \ref{cond:boundedhyp1}, $d(f_i(x'), g(x'))\leq K$ for all $x'\in X$. Hence for a $k$-child $w$ of some $x'\in X$ for $k< D$ we have $f_{i+1}(w) = v$ and hence $d(f_{i+1}(w), g(w))\leq d(v, g(x')) + d(g(x'), g(w))\leq K+CD+C$, which, together with \ref{cond:bound2}, proves c). 

It only remains to show that $f_v'$ as defined above is a valid choice, that is, $f_v'$ satisfies \eqref{item:image_is_boundary_subtree} and \eqref{item:kind_of_injective}. For \eqref{item:image_is_boundary_subtree}, define $S_v = \{y \in T_v| \text{$y\not\in T_a $ for all $a\in A_v$}\}$. If $w\in \partial S_v$, then $w\in T_{a_w}$ for some $a_w\in A_v$ while its parent is not in $T_{a_w}$. It follows that $w = a_w$. Further, for any $b\in g^{-1}(a_w)$ we have that $f_v'(b) \in [a_w, v_0]$ but $f_v'(b)\not\in S_v$. Hence $f_v'(b) = a_w$ implying that $a_w \in \mathrm{Im} (f_v')$. 

Thus $\partial S_v = \mathrm{Im}(f_v')$ is finite. If $S_v$ is infinite, there exists a vertex $u\in S_v$ which is further away from $v_0$ than all points in the finite set $\partial S_v$. Consequently, $T_u\subseteq S_v$. Since $g$ is a quasi-isometry (and hence coarsely surjective), there exists a vertex $u'\in T$ with $d(v_0, u')\geq (i+1)D$ and $g(u')\in T_u$. We have that $u'$ is the descendant of some $Di$-child $x'$ of $v_0$. By Claim \ref{Claim 1} from the proof of Lemma \ref{lemma:is_qi} either $x'\in X$ or $T_{f_i(x')}$ is disjoint from $T_v$. By \ref{cond:subsethyp} the latter cannot be the case. Consequently, $u'\in T_b$ for some $b\in B_v$ and since $g$ is order preserving, $g(u')\in T_{g(b)}$. This is a contradiction to $g(u')\in S_v$. Thus $S_v$ is indeed finite.

In order to prove \eqref{item:image_is_boundary_subtree}, it remains to show that $v\in S_v$, or in other words, that $v\not\in A_v$. Let $b\in B_v$ be a $D$-child of some vertex $x'\in X$. By \ref{cond:boundedhyp1} and the fact that $g$ is a $C$-quasi-isometry, $d(g(b), v)\geq d(g(b), g(x')) - d(g(x'), f_i(x')) \geq  d(g(b), g(x')) - K >0$, so indeed $g(b)\neq v$. Since this is true for all $b\in B_v$, it follows that $v\not\in A_v$.

Next we prove \eqref{item:kind_of_injective}. Let $b$ be a $D$-child of $x$ and $b'$ be a $D$-child of $x'$ with $x\neq x'\in X$. We have $d(b, b')\geq 2D$. Thus $d(g(b), g(b'))\geq 2D/C - C > 2K+C$, which implies that $d(f_v'(b), f_v'(b')) > C$. In particular, $f_v'(b)\neq f_v'(b')$.
\end{proof}

\bibliography{main}
\bibliographystyle{alpha}

\end{document}